\newtheorem{theorem}{Theorem}[section]
\theoremstyle{definition}
\newtheorem{remark}[theorem]{Remark}
\newcommand{\R}{\mathbb R}
\newcommand{\Ss}{\mathbb S}
\newcommand{\cal}{\mathcal}
\DeclareMathOperator{\sgn}{sgn}
\begin{document}

\title{Cutting convex curves}

\author{Andreas F. Holmsen}
\address{A.~F.~Holmsen\\ Department of Mathematical Sciences, KAIST\\ Daejeon, South Korea} 
\email{andreash@kaist.edu}

\author{J{\'a}nos Kincses}
\address{J{\'a}nos Kincses\\ University of Szeged, Bolyai Institute
Aradi v\'ertan\'uk tere 1., H-6720 Szeged, Hungary}
\email{kincses@math.u-szeged.hu}

\author{Edgardo Rold\'an-Pensado}
\address{E. Rold\'an-Pensado\\ Instituto de Matem\'aticas, UNAM campus Juriquilla\\ Quer\'etaro, M\'exico}
\email{e.roldan@im.unam.mx}

\subjclass[2010]{Primary 52A20; Secondary 52A35, 52C40}
\keywords{Hyperplane partition \and Transversal hyperplane \and Order-type \and Fixed point}

\thanks{The second author was supported by CONACyT project 166306.}

\begin{abstract}
We show that for any two convex curves $C_1$ and $C_2$ in $\R^d$ parametrized by $[0,1]$ with opposite orientations, there exists a hyperplane $H$ with the following property: For any $t\in [0,1]$ the points $C_1(t)$ and $C_2(t)$ are never in the same open halfspace bounded by $H$. This will be deduced from a more general result on equipartitions of ordered point sets by hyperplanes.
\end{abstract} 

\maketitle 

\section{Introduction} 

In \cite{HST2014} the following theorem is proved: {\em If $A_1$, $A_2$, $\ldots$, $A_n$ and $B_1$, $B_2$, $\ldots$, $B_n$ are the vertices of two convex polygons in the plane ordered cyclically with opposite orientation, then there exists a line that intersects each of the line segments $A_jB_j$. }

This result can be derived from a continuous version of the problem which has an elementary topological argument (which is what they do in \cite{HST2014}). The natural problem which is raised in \cite{HST2014} is to try to generalize this result to higher dimensions, and some partial results are proven for convex polytopes in $\R^3$ (but with some limitations). 

Here we will give a generalization of this theorem to arbitrary dimensions. Our proof is essentially different from the one given in \cite{HST2014} and uses notions from oriented matroid theory together with a basic fixed-point theorem. 

\medskip

A \emph{convex curve} in $\R^d$ is a continuous mapping $C \colon[0,1] \to \R^d$ which intersects every hyperplane at most $d$ times, meaning $\lvert \{t\in [0,1] : C(t) \in H \} \rvert\leq d$ for any hyperplane $H\subset \R^d$. A typical example of a convex curve in $\R^d$ is the so-called \emph{moment curve},
\[
 \left\{\left(t, t^2, \dots, t^d\right) : t\in [0,1]\right\},
\]
which has numerous applications in discrete and computational geometry. For instance, the convex hull of $n>d$ distinct points on the moment curve in $\R^d$ is a cyclic $d$-polytope \cite{Zie1995}, which is arguably the most useful example of a neighborly polytope. 

A convex curve is \emph{closed} if $C(0) = C(1)$, in which case we require that $\lvert \{ t\in [0,1) : C(t)\in H \} \rvert\leq d$ for any hyperplane $H\subset \R^d$. Notice that a closed convex curve in $\R^d$ exists only when the dimension $d$ is even. A typical example of a closed convex curve is the \emph{trigonometric moment curve},
\[
 \{(\cos (2\pi t), \sin (2\pi t), \cos (4\pi t), \sin (4\pi t), \dots, \cos (2d\pi t), \sin (2d\pi t)) : 0\leq t \leq 1\}.
\]

The convex hull of the trigonometric moment curve was first studied by Carath\'{e}odory \cite{Cara1907}, and its projections give rise to interesting examples of orbitopes and spectahedra \cite{SSS2011}. An important feature of a convex curve in $\R^d$ is the fact that for any $0\leq t_0 < t_1 < \dots < t_d\leq 1$, the determinant
\begin{equation}\label{det}
 \det \begin{bmatrix}
 C(t_0) & C(t_1) & \cdots & C(t_d) \\
 1 & 1 & \cdots & 1 
 \end{bmatrix}
\end{equation}
does not vanish, which is in fact a defining property of convex curves \cite{Sch1954}. (In the case of closed convex curves we naturally require that $t_d<1$.) This implies that the determinant \eqref{det} has the same sign for all choices $0\leq t_0 < t_1 < \dots < t_d\leq 1$, and therefore we may define the \emph{orientation} of a convex curve $C$ to be \emph{positive} or \emph{negative} according to the sign of the determinant \eqref{det}.

The main motivation behind this note is to report the following interesting property concerning \emph{pairs} of convex curves.

\begin{theorem}\label{convex curves}
 Let $C_1$ and $C_2$ be (closed) convex curves in $\R^{d}$ with opposite orientations. There exists a hyperplane $H$ such that the points $C_1(t)$ and $C_2(t)$ are never contained in the same open halfspace bounded by $H$.
\end{theorem}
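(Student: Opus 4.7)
My plan is to first reduce the continuous statement to a finitary combinatorial one by compactness, and then to attack the finite problem via a topological fixed-point argument that exploits the opposite-orientation hypothesis through oriented matroid theory.

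\textbf{Step 1: Compactness reduction.} Parametrize oriented affine hyperplanes of $\R^d$ by points of the sphere $\Ss^d\subset\R^{d+1}$ via $(v,b)\mapsto H_{v,b}=\{x\in\R^d:\langle v,x\rangle=b\}$. For each $t\in[0,1]$ the subset
\[
K_t=\bigl\{(v,b)\in\Ss^d:\bigl(\langle v,C_1(t)\rangle-b\bigr)\bigl(\langle v,C_2(t)\rangle-b\bigr)\le 0\bigr\}
\]
is closed. The desired hyperplane exists if and only if $\bigcap_{t\in[0,1]}K_t\neq\emptyset$, and by the finite intersection property on the compact space $\Ss^d$, continuity of $t\mapsto C_j(t)$ reduces the problem to proving $\bigcap_{t\in F}K_t\neq\emptyset$ for every finite $F\subset[0,1]$.

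\textbf{Step 2: Discrete formulation.} Given $F=\{t_1<\dots<t_n\}$, set $p_i=C_1(t_i)$ and $q_i=C_2(t_i)$. The convex-curve determinant condition \eqref{det} asserts that every $(d+1)\times(d+1)$ sub-determinant of the ordered configuration $P=(p_1,\dots,p_n)$ is nonzero and has the common sign encoding the orientation of $C_1$, and similarly for $Q=(q_1,\dots,q_n)$. Since the two curves have opposite orientations, these two chirotopes are everywhere opposite. The task thus becomes: \emph{given two ordered $n$-tuples in $\R^d$ with opposite chirotopes, produce a hyperplane $H$ that weakly separates $p_i$ from $q_i$ for every $i$.} This is the combinatorial equipartition statement hinted at in the abstract.

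\textbf{Step 3: Fixed-point argument and main obstacle.} For each direction $v\in\Ss^{d-1}$ the linear functionals $\langle v,p_\cdot\rangle$ and $\langle v,q_\cdot\rangle$ induce two linear orderings of $\{1,\dots,n\}$, and the opposite-chirotope condition tightly constrains how these orderings can change as $v$ moves on the sphere, since the only permissible local swaps happen when $v$ crosses a hyperplane orthogonal to some $p_i-p_j$ or $q_i-q_j$. I would attempt to single out a canonical offset $b(v)\in\R$---morally, a ``matching median'' chosen so that the combined sequence interlaces correctly---and then build a continuous self-map $\Phi\colon\Ss^{d-1}\to\Ss^{d-1}$ whose fixed points, or Borsuk--Ulam antipodal coincidences, translate into a pair $(v,b(v))$ realising a simultaneous weak separator; Brouwer's theorem then yields the required $v$. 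The main difficulty lies precisely here: constructing $\Phi$ so that it is genuinely continuous across the codimension-one walls where the orderings jump, and arranging that its fixed locus really encodes simultaneous separation rather than some weaker equipartition. It is at this juncture that the oriented-matroid viewpoint is indispensable, because only the opposite-chirotope structure forces the right combinatorial sign-change pattern for the topological degree argument to close.
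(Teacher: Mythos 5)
Your Steps 1 and 2 are sound and essentially reproduce the paper's own reduction: compactness (the paper uses a convergent subsequence of hyperplanes, you use the finite intersection property, which is equivalent) brings the problem down to finitely many parameter values, and the determinant condition \eqref{det} together with the opposite-orientation hypothesis turns the finite problem into exactly the statement of Theorem \ref{ordertypes}: two finite configurations related by an order-type reversing bijection admit a hyperplane meeting every segment $p_iq_i$. But Step 3, which is the entire substance of the theorem, is not a proof: you describe a hoped-for construction (a canonical offset $b(v)$ and a self-map $\Phi$ of $\Ss^{d-1}$) and then explicitly concede that you do not know how to make $\Phi$ continuous across the walls where the orderings jump, nor how to guarantee that its fixed locus encodes simultaneous separation rather than a weaker equipartition. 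That is precisely the gap; no candidate definition of $b(v)$ is given, and indeed there is no evident continuous canonical choice of offset, because the relevant combinatorial data is not an ordering of projections along $v$ but the full covector structure of the two configurations.

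The paper closes this gap by abandoning the direction-plus-offset picture and working homogeneously: embed $\R^d$ as the hyperplane $x_{d+1}=1$ in $\R^{d+1}$, so that affine hyperplanes correspond to points $x\in\Ss^d$ (the space you already introduced in Step 1 and then dropped), and the condition to be realized is $\sgn\langle x,v_i\rangle=-\sgn\langle x,w_i\rangle$ for all $i$, where $v_i,w_i$ are the lifts of $p_i,q_i$. The two vector configurations induce antipodally symmetric cell decompositions ${\cal C}^{_V}$ and ${\cal C}^{_{W}}$ of $\Ss^d$ by sign vectors; since $\gamma$ reverses the chirotope, these decompositions are combinatorially equivalent, and one builds skeleton by skeleton a homeomorphism $g\colon\Ss^d\to\Ss^d$ carrying each cell with sign vector $\sigma$ to the cell of ${\cal C}^{_W}$ with sign vector $-\sigma$ (this cell-wise construction is what replaces your problematic continuity argument). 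The degree of $g$ is then computed on a simplicial $d$-cell, whose existence is guaranteed by Shannon's theorem on simplicial cells in arrangements, and the orientation-reversal forces $\deg(g)=(-1)^d\neq(-1)^{d+1}$; hence $g$ cannot be fixed-point free, and a fixed point is exactly a point $x$ with $\sigma^{_V}(x)=-\sigma^{_W}(x)$, i.e.\ the desired weakly separating hyperplane. If you want to complete your outline, you should replace your Step 3 by this covector/degree argument (or something equivalent); as written, the proposal identifies the right target statement but leaves its proof open.
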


For $d=2$ this is the main result shown in \cite{HST2014}. Somewhat surprisingly, the convexity plays a rather minor role. Theorem \ref{convex curves} will be deduced from a more general result concerning point sets, stated below as Theorem \ref{ordertypes}.

\section{Order-types}\label{sec:ot}

Let $A$ be a set of points in $\R^d$ which affinely span $\R^d$. The \emph{order-type} of $A$ is the set of signs of the determinants \begin{equation}\label{orientation}
 \det \begin{bmatrix}
 a_0 & a_1 & \cdots & a_d \\
 1 & 1 & \cdots & 1
 \end{bmatrix}
\end{equation}
indexed by the $(d+1)$-tuples $(a_0,a_1,\cdots,a_d)\in A^{d+1}$ with distinct entries. Notice that the condition that $A$ affinely spans $\R^d$ guarantees the existence of at least one $(d+1)$-tuple such that the determinant \eqref{orientation} is non-zero.
Usually, the notion of order-type is used with finite sets of points, however we will allow the possibility of $A$ being infinite.

The order-type defines an equivalence relation on sets of points in $\R^d$, in which two sets $A$ and $B$ are equivalent if there exists a bijection $\gamma:A\to B$ with
\begin{equation}\label{preserve}
 \sgn\det \begin{bmatrix}
 a_0 & a_1 & \cdots & a_d \\
 1 & 1 & \cdots & 1
 \end{bmatrix} = \sgn\det \begin{bmatrix}
 \gamma(a_0) & \gamma(a_1) & \cdots & \gamma(a_d) \\
 1 & 1 & \cdots & 1 \end{bmatrix}
\end{equation}
for all $(d+1)$-tuples $(a_0,a_1,\cdots,a_d)$ with distinct entries (see e.g. \cite{GP1983}).

To the other extreme, we say that the sets $A$ and $B$ have \emph{opposite} order-types if
\[
 \sgn\det \begin{bmatrix}
 a_0 & a_1 & \cdots & a_d \\
 1 & 1 & \cdots & 1
 \end{bmatrix} = -\sgn\det \begin{bmatrix}
 \gamma(a_0) & \gamma(a_1) & \cdots & \gamma(a_d) \\
 1 & 1 & \cdots & 1 \end{bmatrix}
\]
is satisfied instead of \eqref{preserve}. We say in this case that $\gamma$ is \emph{order-type reversing}.

\begin{theorem}\label{ordertypes}
 Let $A$ and $B$ be point sets in $\R^d$ which affinely span $\R^d$. If $\gamma:A\to B$ is an order-type reversing bijection, then there exists a hyperplane which intersects all the segments $ab$ with $b=\gamma(a)$.
\end{theorem}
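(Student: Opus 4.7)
\begin{remark}[Proof plan]
The plan is to first reduce to the case of finite $A$ and $B$ by a compactness argument, then attack the finite case with a fixed-point theorem applied to a carefully chosen map on the sphere of affine functionals.

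For the reduction, I would parametrize (unoriented) affine hyperplanes in $\R^d$ by the sphere $\Ss^d\subset\R^{d+1}$ of unit affine functionals, identifying $f\in\Ss^d$ with $H_f=\{x\in\R^d : f(x)=0\}$. For each $a\in A$ the set
\[
 T_a \;=\; \{f\in\Ss^d \,:\, f(a)\,f(\gamma(a))\le 0\}
\]
is closed, and the hyperplane $H_f$ crosses the segment $a\gamma(a)$ exactly when $f\in T_a$. Since $\Ss^d$ is compact, $\bigcap_{a\in A}T_a\ne\emptyset$ as soon as every finite subfamily has a common point. Any finite $A_0\subseteq A$ large enough to affinely span $\R^d$ inherits the order-type reversing bijection $\gamma|_{A_0}$, so it suffices to prove the theorem for finite sets.

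Assuming now that $A,B$ are finite, after a tiny perturbation I would put $A\cup B$ in general position (no $d+1$ points affinely dependent) while keeping the signs in \eqref{orientation} -- and hence the hypothesis on $\gamma$ -- unchanged. The generic hyperplane $f\in\Ss^d$ then gives rise to a sign vector $\sigma(f)\in\{-,+\}^{A\cup B}$, which is a covector of the oriented matroid of $A\cup B$. The goal becomes: find a covector whose restrictions to $A$ and to $B$ are pointwise opposite along $\gamma$. The hypothesis that $\gamma$ reverses the chirotope says exactly that the oriented matroid of $B$ is the ``totally reoriented'' image (along $\gamma$) of the oriented matroid of $A$, and my hope is that this symmetry forces the desired covector to exist.

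The topological core of the argument is where I would invoke a fixed-point / Borsuk--Ulam theorem. The natural setup is to build a continuous $\Z/2$-equivariant map $\Phi\colon\Ss^d\to\Ss^d$ (with $\Phi(-f)=-\Phi(f)$) that records, for each hyperplane $f$, a ``correction direction'' pointing away from those pairs $(a,\gamma(a))$ on which $f$ fails the opposite-sign condition, for instance something of the shape
\[
 \Phi(f)\;\propto\; f\;-\;\sum_{a\in A}\lambda_a(f)\,\bigl(a+\gamma(a),\,2\bigr),
\]
with nonnegative weights $\lambda_a(f)$ that vanish precisely when $f(a)f(\gamma(a))\le 0$. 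An equivariant self-map of $\Ss^d$ has degree an odd integer, so by Borsuk--Ulam (or directly Brouwer applied to $\R P^d$) it has a fixed point or an antipodal pair with a common image, and one then needs to argue that at such a point all weights $\lambda_a(f)$ must be $0$, i.e.\ $f$ is a transversal.

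The main obstacle is precisely to engineer these weights $\lambda_a(f)$ so that (i) $\Phi$ is continuous and antipodally equivariant on all of $\Ss^d$, and (ii) the order-type reversing hypothesis genuinely enters, forcing that the only fixed points (or Borsuk--Ulam coincidences) have all $\lambda_a(f)=0$. This is where I would expect the chirotope identity $\chi_B\circ\gamma=-\chi_A$ to be used: expanding a vanishing ``obstruction'' determinant in terms of the weights should couple pairs $(a,\gamma(a))$ through Grassmann--Pl\"ucker relations, and the sign flip between $\chi_A$ and $\chi_B$ should preclude any nontrivial cancellation. If this direct construction is too delicate, a KKM-type alternative would cover $\Ss^d$ by the open sets $U_a=\Ss^d\setminus T_a$ and use the combinatorics of the resulting nerve, again driven by opposite chirotopes, to contradict a covering of $\Ss^d$.
\end{remark}
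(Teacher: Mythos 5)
Your reduction to the finite case is fine (the closed sets $T_a$ together with compactness of $\Ss^d$ give essentially the same approximation the paper uses, stated a bit more cleanly), but the finite case, which is the whole content of the theorem, is not actually proved: what you give is a plan whose decisive step is missing, and you say so yourself. Two concrete problems. First, the perturbation step is not innocent: an order-type reversing $\gamma$ sends degenerate $(d+1)$-tuples of $A$ to degenerate tuples of $B$ (since $-0=0$ in \eqref{orientation}), and after an independent small perturbation of $A\cup B$ such a pair of tuples acquires nonzero signs that need not be opposite, so ``general position while keeping the hypothesis on $\gamma$ unchanged'' requires an argument you do not supply (the paper's proof avoids this entirely by working with the possibly non-simple cell decompositions of $\Ss^d$). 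Second, and more seriously, the topological core is absent. The candidate map $\Phi(f)\propto f-\sum_{a}\lambda_a(f)\,(a+\gamma(a),2)$ is not antipodally equivariant as written, because the condition $f(a)f(\gamma(a))\le 0$ is invariant under $f\mapsto -f$, so the correction term does not change sign; odd degree by itself buys nothing, since a fixed-point-free self-map of $\Ss^d$ has degree $(-1)^{d+1}$, which is odd; and, most importantly, you give no mechanism forcing all weights $\lambda_a(f)$ to vanish at a fixed point or antipodal coincidence --- corrections from different violated pairs can cancel, and ruling that out is exactly where the order-type reversing hypothesis (without which the statement is false) must enter. The appeal to Grassmann--Pl\"ucker relations or a KKM covering is a hope, not an argument.

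For comparison, the paper makes the hypothesis do work as follows: lift $A$ and $B$ to vector configurations $V$ and $W=\gamma(V)$ in $\R^{d+1}$, note that orientation reversal makes the cell decompositions ${\cal C}^{_V}$ and ${\cal C}^{_W}$ of $\Ss^d$ combinatorially equivalent, build a homeomorphism $g\colon\Ss^d\to\Ss^d$ sending the cell with sign vector $\sigma$ to the cell with sign vector $-\sigma$, and compute $\deg(g)=(-1)^d$ on a simplicial cell (Shannon's lemma), the sign reversal of the chirotope entering precisely in that degree computation; since a fixed-point-free map has degree $(-1)^{d+1}$, $g$ has a fixed point $x$, and $\sigma^{_V}(x)=-\sigma^{_W}(x)$ is exactly the statement that the hyperplane $x^\perp$ meets every segment. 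Some construction of this kind --- a concrete map whose fixed points are transversals and whose degree is pinned down by the reversed signs --- is what your sketch still needs before it can be called a proof.
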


\begin{remark}
 The condition on the affine span of the point sets could be weakened, but this would involve refining the notion of the order-type (since all the determinants \eqref{orientation} would vanish) and the statement of Theorem \ref{ordertypes} would become more technical. 
\end{remark}

\begin{remark}
 The order-type preserving or reversing property of a map includes that affine independent points are mapped to affine independent points. For ''nice'' infinite sets this property in itself is so strong that the map must be very special. For example, it is not hard to prove that if the set
 $A\subseteq {\mathbb R}^d$ $(d\ge3)$ contains the boundary points of a bounded open set then the map must be the restriction of a projective map for $A$.
\end{remark}

The proof of Theorem \ref{ordertypes} is given in the following section. To see how this theorem implies Theorem \ref{convex curves}, simply take $\gamma$ to be the function that maps $C_1(t)$ to $C_2(t)$ for every $t$.

\section{Proof of Theorem \ref{ordertypes}}\label{proof}
Here we prove a slightly more general statement given below in Theorem \ref{reversal} which will easily imply Theorem \ref{ordertypes}. It will be more convenient to reformulate this in linear terms as finite vector configurations in $\R^{d+1}$. Readers familiar with the theory of oriented matroids \cite{BVSWZ99} will recognize the concepts immediately.

Let $V = \{v_1, v_2, \dots, v_n\}$ be a finite configuration of non-zero vectors in $\R^{d+1}$, and assume that the linear span of $V$ is $(d+1)$-dimensional. Let $\Ss^d$ denote the unit sphere centered at the origin. For every $x\in \Ss^d$ we associate a \emph{sign vector}, $\sigma^{_V}(x) \in \{+,-,0\}^n$, by defining the $i$-th coordinate of $\sigma^{_V}(x)$ as
\[
 \sigma^{_V}(x)_i = \sgn{\langle x,v_i \rangle},
\]
where $\langle \cdot, \cdot \rangle$ denotes the usual Euclidean inner product. 

The set of points in $\Ss^d$ with the same sign vector forms an open topological cell, as it is the intersection of $\Ss^d$ with an open convex cone with apex at the origin. The set of all the cells forms a cell decomposition of $\Ss^d$, which we denote by ${\cal C}^{_V}$, and two such cell decompositions are called \emph{combinatorially equivalent} if their face posets are isomorphic. Notice that these cell decompositions are \emph{antipodal} in the sense that for a cell corresponding to a signed vector $\sigma^{_V}$ there is a (geometrically) antipodal cell which corresponds to the signed vector $-\sigma^{_V}$. 

A function $\gamma \colon V \to \R^{d+1}$ is \emph{orientation reversing} if
\[
 \sgn \det \left[ v_{i_0}, v_{i_1}, \dots, v_{i_{d}} \right] = - \sgn \det \left[ \gamma(v_{i_0}), \gamma(v_{i_1}), \dots, \gamma(v_{i_{d}}) \right]
\]
for all choices of indices $1\leq i_0 < i_1 < \dots < i_{d} \leq n$. Here $\left[ v_{i_0}, v_{i_1}, \dots, v_{i_{d}}\right]$ denotes the matrix in $\R^{(d+1)\times(d+1)}$ with the $v_{i_j}$ as column vectors. It is a well-known fact that the face poset of ${\cal C}^{_V}$ is determined by the set of signs of the determinants of the matrices $\left[ v_{i_0}, v_{i_1}, \dots, v_{i_{d}}\right]$. This corresponds to the equivalence between the covector axioms and chirotope axioms for oriented matroids (see e.g. \cite[Chapter 5]{BVSWZ99}). Moreover, it follows that if $\gamma$ is orientation reversing, then ${\cal C}^{_V}$ and ${\cal C}^{_{\gamma(V)}}$ are combinatorially equivalent.

\begin{theorem}\label{reversal}
 Let $V = \{v_1, \dots, v_n\}$ be a configuration of non-zero vectors in $\R^{d+1}$ which linearly spans $\R^{d+1}$. For any orientation reversing function $\gamma \colon V \to \R^{d+1}$ there exists a point $x\in \Ss^d$ such that the associated sign vectors satisfy $\sigma^{_V}(x) = - \sigma^{_{\gamma(V)}}(x)$.
\end{theorem}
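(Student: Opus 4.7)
The plan is to reduce Theorem~\ref{reversal} to a fixed-point problem on $\Ss^d$ by constructing a suitable cellular homeomorphism $\phi\colon \Ss^d\to \Ss^d$ and invoking a basic degree-theoretic fact about self-maps of spheres. Because $\gamma$ reverses the chirotope of $V$ but a chirotope only determines an oriented matroid up to a global sign, the configurations $V$ and $\gamma(V)$ give rise to the same oriented matroid; in particular the covector sets of ${\cal C}^{_V}$ and ${\cal C}^{_{\gamma(V)}}$ coincide, and the identity map on covectors is an isomorphism of face posets. Both are regular CW decompositions of $\Ss^d$ arising from central hyperplane arrangements, so I would build a cellular homeomorphism $\phi$ taking the cell of ${\cal C}^{_V}$ labeled $\sigma$ onto the cell of ${\cal C}^{_{\gamma(V)}}$ labeled $\sigma$ for every covector $\sigma$. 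Since both decompositions and their labelings are antipodally invariant (using $\sigma^{_V}(-x)=-\sigma^{_V}(x)$), $\phi$ can be chosen to satisfy $\phi(-x)=-\phi(x)$.

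The crucial point---and the place where I expect the technical work to concentrate---is showing that $\phi$ has topological degree $-1$. The intuition is that the orientation of a chamber of ${\cal C}^{_V}$, viewed as an open subset of $\Ss^d$, is controlled by the sign of $\chi_V$ on bases that one reads off from the chamber's local combinatorics; replacing $\chi_V$ by $\chi_{\gamma(V)}=-\chi_V$ flips this orientation, so $\phi$ should restrict to an orientation-reversing homeomorphism on each chamber. Since $\phi$ is a global homeomorphism of $\Ss^d$, its local degree is constant, yielding $\deg\phi=-1$. The cleanest implementation I can envision is to fix a basis $\{v_{i_0},\dots,v_{i_d}\}\subseteq V$, consider the spherical simplex on $\Ss^d$ cut out by the corresponding dual basis, and compare its induced orientation with the orientation of the simplex produced from the $\gamma$-image basis; the sign of $\chi_V(i_0,\dots,i_d)=-\chi_{\gamma(V)}(i_0,\dots,i_d)$ then provides the desired orientation flip.

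With $\deg\phi=-1$ established, the composition $A\circ\phi$ with the antipodal map $A\colon x\mapsto -x$ has degree $(-1)^{d+1}\cdot(-1)=(-1)^d$, which is never equal to $(-1)^{d+1}$. The elementary fact that any fixed-point-free continuous self-map of $\Ss^d$ is homotopic to the antipodal map (via the straight-line homotopy on the sphere) and therefore has degree $(-1)^{d+1}$ then forces $A\circ\phi$ to have a fixed point $x_0\in\Ss^d$, i.e.\ $\phi(x_0)=-x_0$. Combining the label-preserving property of $\phi$ with the antipodal identity $\sigma^{_{\gamma(V)}}(-x_0)=-\sigma^{_{\gamma(V)}}(x_0)$ yields
\[
\sigma^{_V}(x_0)=\sigma^{_{\gamma(V)}}(\phi(x_0))=\sigma^{_{\gamma(V)}}(-x_0)=-\sigma^{_{\gamma(V)}}(x_0),
\]
which is exactly the conclusion of Theorem~\ref{reversal}.
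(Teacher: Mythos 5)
Your proposal is correct and follows the same overall strategy as the paper: extend the combinatorial correspondence between ${\cal C}^{_V}$ and ${\cal C}^{_{\gamma(V)}}$ to a cellular homeomorphism of $\Ss^d$, determine its degree by a determinant computation, and use the fact that a fixed-point-free self-map of $\Ss^d$ has degree $(-1)^{d+1}$. Your repackaging is harmless: the paper's map $g$ sends the cell of ${\cal C}^{_V}$ labeled $\sigma$ to the cell of ${\cal C}^{_{\gamma(V)}}$ labeled $-\sigma$, which is exactly your $A\circ\phi$, and its fixed point is your point with $\phi(x_0)=-x_0$; the bookkeeping matches, since $\deg g=(-1)^{d+1}\deg\phi=(-1)^d$. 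The one genuine difference lies in the degree computation. The paper invokes Shannon's lemma to produce a simplicial $d$-cell of ${\cal C}^{_V}$ and compares the vertex matrices of that cell and of its image, whereas you take an arbitrary basis $v_{i_0},\dots,v_{i_d}\in V$ and the spherical simplex spanned by its dual basis; this buys you independence from Shannon's result, but that simplex is in general a union of cells of ${\cal C}^{_V}$ rather than a single cell, so you owe a short verification that $\phi$ carries it onto the dual simplex of $\gamma(v_{i_0}),\dots,\gamma(v_{i_d})$ with corresponding vertices and faces. This does follow from the label-preserving property of $\phi$: a point $x$ lies in the simplex spanned by the dual vectors $x_0,\dots,x_d$ precisely when the coordinates $i_0,\dots,i_d$ of $\sigma^{_V}(x)$ lie in $\{0,\sgn\langle v_{i_j},x_j\rangle\}$, a condition expressed purely in terms of covector labels, and likewise on the image side; with that in place the same diagonal-matrix computation as in the paper gives $\sgn(\det[x_0,\dots,x_d]\cdot\det[\phi(x_0),\dots,\phi(x_d)])=-1$, i.e.\ $\deg\phi=-1$ as you claim, and hence $\deg(A\circ\phi)=(-1)^d$. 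Two minor remarks: the antipodal equivariance $\phi(-x)=-\phi(x)$ you ask for is never actually used in your argument, and your observation that $V$ and $\gamma(V)$ have the \emph{same} covector set (because a chirotope and its negative define the same oriented matroid) is the precise reason the label-preserving $\phi$ exists, matching the paper's appeal to the covector--chirotope equivalence.
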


\begin{proof}
The idea is to extend the function $\gamma$ to a homeomorphism $g \colon \Ss^d \to \Ss^d$ such that the point we are looking for is a fixed point of $g$. The existence of a fixed point is guaranteed by showing that the degree of $g$ equals $(-1)^d$, since any map from $\Ss^d$ to itself without fixed points is homotopic to the antipodal map and therefore has degree $(-1)^{d+1}$ (see e.g. \cite[Chapter 2.2]{Hat2002}).

\emph{Constructing the homeomorphism.}\\
First define $g$ from the vertices of ${\cal C}^{_V}$ (the $0$-cells) to the vertices of ${\cal C}^{_{\gamma(V)}}$ by mapping a vertex $x$ of ${\cal C}^{_V}$ with sign vector $\sigma^{_V}(x)$ to the unique vertex $y$ of ${\cal C}^{_{\gamma(V)}}$ with sign vector $\sigma^{_{\gamma(V)}}(y) = - \sigma^{_V}(x)$. Once $g$ has been defined from the $k$-skeleton of ${\cal C}^{_V}$ to the $k$-skeleton of ${\cal C}^{_{\gamma(V)}}$ we can extend the map continuously to the $(k+1)$-skeletons, since the boundary of each $(k+1)$-cell is homeomorphic to a $k$-sphere consisting of cells of dimension at most $k$. In this way, a cell of ${\cal C}^{_V}$ with sign vector $\sigma^{_V}$ is mapped by a homeomorphism to the unique cell of ${\cal C}^{_{\gamma(V)}}$ with sign vector $\sigma^{_{\gamma(V)}} = - \sigma^{_V}$.

\emph{Calculating the degree.}\\
It follows from a result of Shannon \cite[Lemma 1]{Sha1979} that the cell decomposition ${\cal C}^{_V}$ contains a simplicial $d$-cell, $\Delta$, and we denote the vertices of $\Delta$ by $x_0, x_1, \dots, x_d$. (This is where we use that the span of $V$ is $(d+1)$-dimensional.)
Let $X = \left[ x_0, x_1, \dots, x_d \right]$ and $Y = \left[ y_0, y_1, \dots, y_d \right]$, where the $y_i = g(x_i)$ denote the vertices of the simplicial $d$-cell $g(\Delta)$ of ${\cal C}^{_{\gamma(V)}}$. Since $g$ is a homeomorphism, its degree, $\deg(g)$, is either $+1$ or $-1$ and satisfies
\[
 \deg(g) = (\sgn\det X) \cdot (\sgn\det Y).
\]
Since $x_0, x_1, \dots, x_d$ are vertices of ${\cal C}^{_V}$ there exist vectors $v_{i_0}, v_{i_1}, \dots, v_{i_{d}} \in V$ such that
\[
 \left[ v_{i_0}, v_{i_1}, \dots, v_{i_d}\right]^T \cdot X
\]
is a diagonal matrix with non-zero entries $\alpha_0, \alpha_1, \dots, \alpha_d$ on its main diagonal. Similarly, we get that
\[
 \left[ \gamma(v_{i_0}), \gamma(v_{i_1}), \dots, \gamma(v_{i_d}) \right]^T \cdot Y
\]
is a diagonal matrix with non-zero entries $\beta_0, \beta_1, \dots, \beta_d$ on its main diagonal. Since $\gamma$ is orientation reversing, it follows that
\[
 \sgn \det \left[v_{i_0}, v_{i_1}, \dots, v_{i_d} \right] = - \sgn \det \left[ \gamma(v_{i_0}), \gamma(v_{i_1}), \dots, \gamma(v_{i_d}) \right],
\]
and by the definition of $g$ we get that $\alpha_i \cdot \beta_i <0$ for all $i = 0, \dots, d$. Therefore
\[
 (\sgn \det \left[ v_{i_0}, v_{i_1}, \dots, v_{i_d}\right]^T \cdot X) \cdot (\sgn\det \left[ \gamma(v_{i_0}), \gamma(v_{i_1}), \dots, \gamma(v_{i_d}) \right]^T \cdot Y) = (-1)^{d+1},
\]
which implies that $\deg(g) = (\sgn\det X) \cdot (\sgn\det Y) = (-1)^d$.
\end{proof}

\begin{proof}[Proof of Theorem \ref{ordertypes}]
First assume that $A=\{a_1,a_2,\dots,a_n\}$ and $B=\{b_1,b_2,\dots,b_n\}$ are finite, and $\gamma(a_i)=b_i$ for $i=1,\dots,n$. If we think of $\R^d$ as being embedded in $\R^{d+1}$ as the affine hyperplane $\{(x_1, x_2, \dots, x_d, 1): x_i \in \R\}$, the point sets $A$ and $B$ in $\R^d$ can be thought of as vector configurations $V = \{v_1, v_2, \dots, v_n\}$ and $W = \{w_1, w_2, \dots, w_n\}$, respectively. Since $A$ and $B$ affinely span $\R^d$, it follows that $V$ and $W$ linearly span $\R^{d+1}$, and the fact that $A$ and $B$ have opposite order-types means that $\gamma$ is orientation reversing in $\R^{d+1}$. By Theorem \ref{reversal} there is a vector $x\in \Ss^d \subset \R^{d+1}$ such that $\langle x, v_i\rangle = - \langle x, w_i \rangle$ for every $i = 1, 2, \dots, n$. Thus, the vectors $v_i$ and $w_i$ lie on opposite sides of the orthogonal complement $x^\perp$, and therefore $H = x^\perp \cap \{(x_1, x_2, \dots, x_d, 1) : x_i \in \R\}$ is a hyperplane which intersects each of the segments $a_ib_i$.

The infinite case follows by a simple approximation argument. For each $n>d$ define a point set $A_n\subset A$ with $n$ elements that affinely span $\R^d$, and let $B_n=\gamma(A_n)$. Then there is a hyperplane $H_n$ which intersects each of the segments $ab$ with $a\in A_n$ and $b=\gamma(a)$. The sequence of hyperplanes $\{H_n\}$ contains a subsequence which converges to a hyperplane $H$ with the desired properties.
\end{proof}

\section*{acknowledgements}
The research of the third author was supported by CONACyT project 166306.


\begin{thebibliography}{1}

\bibitem{BVSWZ99}
A.~Bj{\"o}rner, M.~Las~Vergnas, B.~Sturmfels, N.~White, and G.~M. Ziegler,
 \emph{Oriented matroids}, Encyclopedia of Mathematics and its Applications,
 vol.~46, Cambridge University Press, Cambridge, 1993.

\bibitem{Cara1907}
C.~Carath{\'e}odory, \emph{\"{U}ber den {V}ariabilit\"atsbereich der
 {K}oeffizienten von {P}otenzreihen, die gegebene {W}erte nicht annehmen},
 Math. Ann. \textbf{64} (1907), no.~1, 95--115.

\bibitem{GP1983}
J.~E. Goodman and R.~Pollack, \emph{Multidimensional sorting}, SIAM J. Comput.
 \textbf{12} (1983), no.~3, 484--507.

\bibitem{HST2014}
P.~Hajnal, L. I.~Szab{\'o}, and V.~Totik, \emph{Convex polygons and common
 transversals}, Amer. Math. Monthly {\bf 122} (2015) no.~9, 936-844.

\bibitem{Hat2002}
A. Hatcher, \emph{Algebraic Topology}, Cambridge University Press, 2002.


\bibitem{SSS2011}
R.~Sanyal, F.~Sottile, and B.~Sturmfels, \emph{Orbitopes}, Mathematika
 \textbf{57} (2011), no.~2, 275--314.

\bibitem{Sch1954}
I.~J. Schoenberg, \emph{An isoperimetric inequality for closed curves convex in
 even-dimensional {E}uclidean spaces}, Acta Math. \textbf{91} (1954),
 143--164.

\bibitem{Sha1979}
R.~W. Shannon, \emph{Simplicial cells in arrangements of hyperplanes}, Geom.
 Dedicata \textbf{8} (1979), no.~2, 179--187.

\bibitem{Zie1995}
G.~M. Ziegler, \emph{Lectures on polytopes}, Graduate Texts in Mathematics,
 vol. 152, Springer-Verlag, New York, 1995.

\end{thebibliography}
\end{document}